\documentclass{amsart}

\usepackage{amssymb}
\usepackage{amsmath,amssymb,amsbsy,amsfonts,amsthm,latexsym,amsopn,amstext, amsxtra,euscript,amscd} 

\usepackage{cite}

\usepackage{xy} 
\input xy \xyoption{all}

\newtheorem{theorem}{Theorem}

\newtheorem{lemma}{Lemma}

\newtheorem{corollary}{Corollary}

\theoremstyle{definition}
\newtheorem{definition}{Definition}

\newtheorem{remark}{Remark}

\def\Z{\mathbb Z}
\def\Q{\mathbb Q}
\def\C{\mathbb C}
\def\L{\mathcal L}
\def\H{\mathcal H}
\def\M{\mathcal M}
\def\l{\lambda}
\def\s{\sigma}
\def\a{\alpha}
\def\p{\mathfrak p}
\def\u{\mathfrak u}
\def\v{\mathfrak v}
\def\iso{{\, \cong\, }}
\def\<{\langle}
\def\>{\rangle}
\def\X{\mathcal X}
\def\Aut{\mathrm{Aut}}
\def\bAut{\overline {\mathrm{Aut}}}
\def\emb{\hookrightarrow}

\begin{document}

\title{Hyperelliptic curves of genus 3 with prescribed automorphism group}
\author{J. Gutierrez}

\address{Dept. of Math., Stat. and Comp.     Univ. of Cantabria,     39071 Santander, Spain}
\email{jaime.gutierrez@unican.es}

\thanks{The first author was partially supported by project  MTM2004-07086 of the Spanish Ministry of Science and Technology}

\author{D. Sevilla}
\thanks{The second author was partially supported by Spanish Ministry of Science Project MTM2004-07086.}

\address{Dept. of Comp. Sci. and Software Eng., Concordia University,  1455 de Maisonneuve W., Montreal QC, H3G 1M8 Canada}
\email{sevillad@gmail.com}

\author{T. Shaska}
\thanks{The third author was supported by the NSA grant R1-05-0129}
\address{Dept. of Mathematics, Oakland University, Rochester, MI, 48309-4485.}
\email{shaska@oakland.edu}

\subjclass[2000]{Primary 54C40, 14E20; Secondary 46E25, 20C20}

\keywords{invariants, binary forms, genus 3, algebraic curves}


\begin{abstract}
We study genus 3 hyperelliptic curves which have an extra involution. The locus $\L_3$ of these
curves is a 3-dimensional subvariety in the genus 3 hyperelliptic moduli $\H_3$. We find a
birational parametrization of this locus by affine 3-space. For every moduli point $\p \in \H_3$
such that $|\Aut (\p)|>2$, the field of moduli is a field of definition. We provide a rational
model of the curve over its field of moduli for all moduli points $\p \in \H_3$ such that
$|\Aut(\p)|>4$. This is the first time that such a rational model of these curves appears in the
literature.
\end{abstract}

\maketitle

\section{Introduction}
Let $\X_g$ be an irreducible, smooth, projective curve of genus $g \geq 3$, defined over the complex field $\C$.
We denote by $\M_g$ the coarse moduli space of smooth curves of genus $g$ and by $\H_g$ the hyperelliptic locus in
$\M_g$. It is well known that $\dim\ \M_g = 3g-3$ and $\H_g$ is a $(2g-1)$ -- dimensional subvariety of $\M_g$. A
curve $\X_g$ is called \emph{bielliptic} if it admits a degree 2 morphism $\pi: \X_g \to E$ onto an elliptic
curve. This morphism is called a \emph{bielliptic structure} in $\X_g$; see \cite{BC}. The locus $\M_g^b$ of
bielliptic curves is a $(2g-2)$ -- dimensional subvariety of $\M_g$.

From the Castelnouvo-Severi inequality it follows that $\X_g$ admits precisely one bielliptic structure for $g
\geq 6$, but if $g \leq 5$ then there are curves which admit more than one bielliptic structure. Since every
bielliptic structure corresponds to an involution in the automorphism group $\Aut(\X_g)$ of the curve, then these
results can be obtained easily if the list of groups that occur as automorphism groups is known for a given genus
$g$. Lately, algorithms have been developed to determine such lists of groups for reasonably small $g$; see
\cite{MS}.

A bielliptic curve of genus $g \geq 4$ can not be hyperelliptic (Castelnouvo-Severi inequality), but for $g=3$
this can be the case. The bielliptic (non-hyperelliptic) curves of genus $3$ were studied in \cite{BC}; see also
\cite{ST}. In this paper we will focus in the hyperelliptic case. Such curves are known in the literature also as
hyperelliptic curves with \emph{extra involutions}. This extends our previous work in \cite{SV1}, \cite{GS},
\cite{SS}.

In the second section we give a brief description of the dihedral invariants and how they are used to describe the
loci of curves with fixed automorphism groups. Such invariants can be helpful in determining the automorphism
group of the curve and determining its field of moduli. For further details on dihedral invariants we refer to
\cite{GS}. The reader can also check \cite{SV1} for the case $g=2$. Further in this section we briefly define the
classical invariants of binary forms.

In section three, we focus on genus 3 hyperelliptic curves. We define the invariants of binary
octavics and compute them explicitly in terms of the coefficients of the curve for curves with
extra involutions. The locus $\L_3$ of genus $3$ hyperelliptic curves with extra involutions is a
3-dimensional subvariety of $\H_3$. Using such explicit expressions for the invariants of the
binary forms we find a birational parametrization of the locus $\L_3$ via dihedral invariants.
Further, we make use of such invariants to study certain subvarieties of the moduli space of
hyperelliptic curves of genus 3. The list of groups that occur as automorphism groups of genus 3
hyperelliptic curves is described. Then, for each group in the list we describe algebraic relations
that define the corresponding locus.

If $\X \in \L_3$ then $V_4 \emb \Aut(\X)$. Let $G$ be a group which occurs as an automorphism group
of hyperelliptic curves of genus 3 and such that $V_4 \emb G$. We describe each locus of curves
with automorphism group $G$ in terms of the dihedral invariants and prove that the field of moduli
of such curves is a field of definition. If $|G|>4$ then a rational model of the curve is provided
over its field of moduli. As far as we are aware, this is the first time that such rational models
are known for genus 3 curves.

\medskip

\noindent \textbf{Notation:} We denote a hyperelliptic curve of genus 3 by $\X_3$. $D_n$ denotes
the dihedral group of order $2n$ and $V_4$ the Klein 4-group. Further, $\Z_n$ denotes the cyclic
group of order $n$.

\section{Dihedral invariants of hyperelliptic curves}
In this section we give a brief review of some of the basic results on hyperelliptic curves with extra involutions
and their dihedral invariants. For details see \cite{GS}.

Let $\X_g$ be a genus $g$ hyperelliptic curve defined over $\C$ and $\Aut(\X_g)$ its automorphism
group. We say that $\X_g$ has an \emph{extra involution} when there is a non-hyperelliptic
involution in $\Aut(\X_g)$. If the fixed field of such an extra involution is an elliptic field
then sometimes these are called \emph{elliptic involutions}. The hyperelliptic involution $\a_0 \in
\Aut(\X_g)$ is in the center of $\Aut(\X_g)$. We denote $\bAut(\X_g) := \Aut(\X_g)/\<\a_0\>$ and
call it the \emph{reduced automorphism group} of $\X_g$.

Let $\X_g$ be a genus $g$ hyperelliptic curve with an extra involution $\a_1 \in \Aut(\X_g)$. Then,
$\X_g$ is isomorphic to a curve given by an equation
$$Y^2=X^{2g+2} + a_{g} X^{2g} + \cdots + a_1 X^2 +1.$$
Such equation is called the \emph{normal equation} of the curve $\X_g$. There is a degree 2 map
$$\phi_1 : \X_g \to C_1$$
where $C_1$ is the hyperelliptic curve with equation
$$Y^2=X^{g+1} + a_{g} X^{g}+ \cdots + a_1 X +1$$
and genus $g_1= \left[ \frac g 2 \right]$. Since every hyperelliptic curve $\X_g$ has the
hyperelliptic involution $\a_0 \in \Aut(\X_g)$, then the extra involutions come in pairs $(\a_1,
\a_2=\a_0 \a_1)$. The extra involution $\a_2$ determines another degree 2 covering
$$\phi_2 : \X_g \to C_2$$
where $C_2$ is the hyperelliptic curve with equation
$$Y^2=X(X^{g+1} + a_{g} X^{g}+ \dots + a_1 X +1)$$
and genus $g_2= \left[ \frac {g+1} 2 \right]$. The curve $\X_g$ is called \emph{bielliptic} if
$C_1$ is an elliptic curve. This always happens if $g=2$ or 3.

The Jacobian $J_{\X_g}$ of $\X_g$ is isogenous to $J_{C_1} \times J_{C_2}$. We say that $J_{\X_g}$
splits. Our goal is to determine the locus of such curves $\X_g$ in the variety of moduli. The
locus of genus $g$ hyperelliptic curves with an extra involution is an irreducible $g$-dimensional
subvariety of $\H_g$ which we denote by $\L_g$. The following
$$u_i := a_1^{g-i+1} \, a_i \, + \, a_g^{g-i+1} \, a_{g-i+1}, \quad 1\leq i\leq g$$
are called \emph{dihedral invariants} of genus $g$. The next theorem shows that $\L_g$ is a rational variety; see
\cite{GS}.
\begin{theorem}\label{thm1}
Let $g\geq 2$ and $(u_1,\dots,u_g)$ be the $g$-tuple of dihedral invariants. Then,
$k(\L_g)=k(u_1,\dots,u_g)$.
\end{theorem}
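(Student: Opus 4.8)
The plan is to show the two containments $k(u_1,\dots,u_g) \subseteq k(\L_g)$ and $k(\L_g) \subseteq k(u_1,\dots,u_g)$ separately, where $k(\L_g)$ denotes the function field of the locus $\L_g$ of hyperelliptic curves of genus $g$ with an extra involution. The starting point is the normal equation $Y^2 = X^{2g+2} + a_g X^{2g} + \cdots + a_1 X^2 + 1$, which parametrizes $\L_g$ by the affine coordinates $(a_1,\dots,a_g)$, so that $k(\L_g)$ is the subfield of $k(a_1,\dots,a_g)$ fixed by the group of coordinate changes that preserve the shape of the normal equation. The first step is to identify this group of normalizing substitutions explicitly. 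A curve in normal form is determined by its branch locus, a set of $2g+2$ points which under the choice of normal form is invariant under $X \mapsto -X$ (the extra involution) together with $X \mapsto 1/X$ coming from the requirement that the constant term and leading coefficient both equal $1$. These generate a dihedral-type group of order $8$ acting on $X$ (generated by $X\mapsto -X$, $X\mapsto 1/X$, and scalings by $4$-th roots of unity), and the residual action on the tuple $(a_1,\dots,a_g)$ is what we must compute.

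The second step is the inclusion $k(u_1,\dots,u_g) \subseteq k(\L_g)$: one checks directly that each $u_i = a_1^{g-i+1} a_i + a_g^{g-i+1} a_{g-i+1}$ is invariant under the generators of the normalizing group. The substitution $X \mapsto 1/X$ swaps $a_i \leftrightarrow a_{g-i+1}$ (up to the normalization that rescales so the extreme coefficients stay $1$), which visibly fixes each $u_i$ since the defining expression is symmetric under $a_i \leftrightarrow a_{g-i+1}$; and a scaling $X \mapsto \zeta X$ by a root of unity multiplies $a_i$ by a fixed power of $\zeta$ in such a way that the monomials $a_1^{g-i+1} a_i$ and $a_g^{g-i+1} a_{g-i+1}$ pick up the same power of $\zeta$, which is cancelled after renormalization. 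This is a short invariance computation; I would present the verification for the two generators and remark that the rest follows since they generate.

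The third and main step is the reverse inclusion $k(\L_g) \subseteq k(u_1,\dots,u_g)$, equivalently that the $u_i$ already generate the full fixed field, i.e. that $k(a_1,\dots,a_g)$ has degree $|G|$ over $k(u_1,\dots,u_g)$ where $G$ is the normalizing group. The cleanest route is to exhibit $a_1,\dots,a_g$ as algebraic over $k(u_1,\dots,u_g)$ of the right total degree, or more constructively to invert: from $u_1 = a_1^g a_1 + a_g^g a_g$... more precisely one sets $v := a_g / a_1$ (or a similar ratio) and shows that $v$ satisfies a polynomial over $k(u_1,\dots,u_g)$ and that once $v$ is known all the $a_i$ are recovered rationally. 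Concretely, writing $s = a_1$, each $a_i$ for $2 \le i \le g-1$ pairs with $a_{g-i+1}$, and the relations $u_i = s^{g-i+1}a_i + (sv)^{g-i+1} a_{g-i+1}$, $u_{g-i+1} = s^{i} a_{g-i+1} + (sv)^{i} a_i$ form a linear system in $a_i, a_{g-i+1}$ whose determinant is a monomial in $s,v$ times $(1 - v^{g+1})$ or similar; solving gives $a_i$ as a rational function of $s, v$ and the $u$'s. One then feeds these back into, say, $u_1 = s\,a_1 + s v \cdot s v^{?}\cdots$ — the index-$1$ relation together with one more — to get a single equation for $s$ and $v$ in terms of the $u_i$, from which generic solvability over $k(u_1,\dots,u_g)$ follows. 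I expect the bookkeeping of which power of the scaling root of unity hits each $a_i$, and handling the middle term when $g$ is odd (the self-paired $a_{(g+1)/2}$), to be the main obstacle; everything else is linear algebra and degree counting. Since this theorem is quoted from \cite{GS}, I would keep the argument at the level of indicating these steps and refer there for the full details.
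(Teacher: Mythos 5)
First, note that the paper itself does not prove this theorem: it is quoted from \cite{GS} (``For the proofs of these statements see \cite{GS}''), so there is no in-paper argument to compare against; your outline is in fact the strategy of the cited source (identify the substitutions preserving the normal form, check that the $u_i$ are invariant, then show they generate the whole fixed field by a degree count). There is, however, one genuine error in your step one: you identify the normalizing group as ``a dihedral-type group of order $8$ \dots scalings by $4$-th roots of unity,'' independently of $g$. That cannot be right, since already for $g=2$ the invariants $\u=a_1^3+a_2^3$, $\v=2a_1a_2$ visibly generate the fixed field of an action of order $6$, not $8$. The correct computation: $X\mapsto\lambda X$ preserves the normal form $Y^2=X^{2g+2}+\cdots+1$ (after rescaling $Y$) if and only if $\lambda^{2g+2}=1$, and then $a_i\mapsto\zeta^i a_i$ with $\zeta=\lambda^2$ a $(g+1)$-st root of unity; together with the swap $a_i\mapsto a_{g+1-i}$ induced by $X\mapsto 1/X$, the effective group $G$ on $(a_1,\dots,a_g)$ is dihedral of order $2(g+1)$ --- whence the name ``dihedral invariants.'' Your numbers happen to agree with the $g=3$ case, but the theorem is stated for all $g\ge 2$, and $|G|$ is exactly the quantity your step three must match in the degree count, so it cannot be left at a $g$-independent guess.

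Two smaller points. (a) The identification $k(\L_g)=k(a_1,\dots,a_g)^G$ uses that a generic curve in $\L_g$ has $\Aut\iso V_4$, i.e.\ a unique extra involution up to conjugacy, so that two normal forms give isomorphic curves if and only if they differ by a normalizing substitution; this should be stated (it is a birational claim, so genericity suffices). (b) In step three you can avoid most of the bookkeeping you worry about: $u_1=a_1^{g+1}+a_g^{g+1}$ and $u_g=2a_1a_g$ show that $a_1^{g+1}$ satisfies $T^2-u_1T+(u_g/2)^{g+1}=0$, so $a_1$ has degree at most $2(g+1)$ over $k(u_1,u_g)$; then $a_g=u_g/(2a_1)$, and for $2\le i\le g-1$ the pair of relations for $u_i$ and $u_{g-i+1}$ is a linear system in $(a_i,a_{g-i+1})$ with determinant $a_1^{g+1}-a_g^{g+1}$, generically nonzero (the self-paired middle coefficient for odd $g$ is even easier, since $u_i=(a_1^{(g+1)/2}+a_g^{(g+1)/2})\,a_i$ there). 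Hence $[k(a_1,\dots,a_g):k(u_1,\dots,u_g)]\le 2(g+1)=|G|$, which combined with $k(u_1,\dots,u_g)\subseteq k(a_1,\dots,a_g)^G$ forces equality. With the group corrected, your outline closes.
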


A generic point $\p \in \L_g$ has automorphism group $\Aut(\p) \iso V_4$. Singular points of $\L_g$
have more than one tuple of dihedral invariants and therefore more than one conjugacy class of
involutions in $\bAut(\p)$. For curves with automorphism group isomorphic to $V_4$ we have the
following:

\begin{corollary}
Let $\X_g$ and $\X_g^\prime$ be genus $g$ hyperelliptic curves with automorphism groups isomorphic
to $V_4$, and $(u_1, \dots ,u_g)$, $(u_1^\prime, \dots , u_g^\prime)$ their respective dihedral
invariants. Then,
$$\X_g \iso \X_g^\prime \ \iff\ (u_1, \dots , u_g)=(u_1^\prime, \dots, u_g^\prime).$$
\end{corollary}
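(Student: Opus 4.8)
The plan is to deduce the corollary directly from Theorem~\ref{thm1}, using the fact that when $\Aut(\X_g)\iso V_4$ there is a \emph{unique} conjugacy class of extra involutions, hence a unique tuple of dihedral invariants attached to $\X_g$. First I would recall that a normal equation of $\X_g$ is determined by the choice of an extra involution $\a_1$, and that two normal equations give isomorphic curves precisely when they are related by one of the finitely many coordinate changes $X \mapsto \l X^{\pm1}$ on $\mathbb{P}^1$ that preserve the shape $Y^2 = X^{2g+2}+a_gX^{2g}+\cdots+a_1X^2+1$; this is the content of the setup from \cite{GS} that produces the invariants $u_i = a_1^{g-i+1}a_i + a_g^{g-i+1}a_{g-i+1}$ as the functions invariant under exactly that group action. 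The forward implication ($\X_g\iso\X_g'$ $\Rightarrow$ equal tuples) then follows because the $u_i$ are isomorphism invariants: an isomorphism $\X_g\to\X_g'$ carries the (up to conjugacy unique) extra involution of one curve to that of the other, so it carries a normal equation to a normal equation, and the $u_i$ are unchanged under the allowed coordinate changes.

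For the reverse implication I would argue as follows. Suppose $(u_1,\dots,u_g) = (u_1',\dots,u_g')$. By Theorem~\ref{thm1} the map $\L_g \to \mathbb{A}^g$, $\p\mapsto(u_1,\dots,u_g)$, is birational, so it is generically injective; but to get injectivity at the specific points corresponding to curves with $\Aut\iso V_4$ I need the extra input that such points are not among the ``singular'' points of $\L_g$ where the fiber can be larger. Here I would invoke the dichotomy already stated in the excerpt: a point $\p\in\L_g$ has more than one preimage under the dihedral-invariant map exactly when $\bAut(\p)$ contains more than one conjugacy class of involutions, equivalently when $|\Aut(\p)|>4$. Hence if $\Aut(\X_g)\iso V_4$ and $\Aut(\X_g')\iso V_4$, both $\p$ and $\p'$ are smooth points with singleton fibers, the common invariant tuple pulls back to a single moduli point, and therefore $\p = \p'$, i.e. $\X_g\iso\X_g'$.

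To make the middle step rigorous rather than merely quoting ``singular points,'' the cleanest route is to exhibit explicit formulas recovering $(a_1,\dots,a_g)$, up to the coordinate-change group, from $(u_1,\dots,u_g)$ under the assumption $a_1 a_g \neq 0$ (which holds for curves with $\bAut\iso V_4/\langle\a_0\rangle\iso \Z_2$ and no further automorphisms, since vanishing of $a_1$ or $a_g$ forces an extra symmetry). Concretely, from $u_1 = a_1^{g}a_1 + a_g^{g}a_g$ and the other relations one solves for the symmetric functions of the pair $(a_1,a_g)$ and then for each $a_i$; the two solutions for the ordered pair $(a_1,a_g)$ correspond precisely to the two normal equations coming from $\a_1$ and $\a_2=\a_0\a_1$, which give isomorphic curves. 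I expect the main obstacle to be exactly this bookkeeping: showing that the only ambiguity in inverting the invariant map on the $V_4$-locus is the harmless swap $\a_1\leftrightarrow\a_2$, and handling the boundary cases $a_1 a_g = 0$ by checking that they do not occur when $\Aut\iso V_4$. Everything else is a formal consequence of Theorem~\ref{thm1} and the invariance of the $u_i$.
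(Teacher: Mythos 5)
First, note that the paper does not actually prove this corollary: it defers to \cite{GS}. Your overall strategy --- invert the map from normal-equation coefficients to dihedral invariants, modulo the finite group of coordinate changes $X\mapsto \l X^{\pm1}$ preserving the normal form --- is the right one and is essentially the argument of \cite{GS}. The forward direction as you give it is fine, and you are right that the second paragraph's appeal to ``singular points'' cannot carry the reverse direction on its own (that remark concerns multivaluedness of $\p\mapsto(u_1,\dots,u_g)$, not injectivity of the assignment), so the real content is your third paragraph.

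That third paragraph, however, misidentifies the non-degeneracy conditions, and this is where the hypothesis $\Aut(\X_g)\iso V_4$ actually gets used. From $u_1=a_1^{g+1}+a_g^{g+1}$ and $u_g=2a_1a_g$ one recovers the unordered pair $\{a_1^{g+1},a_g^{g+1}\}$, hence $(a_1,a_g)$ up to the coordinate-change group; then for each $i$ the pair $u_i,u_{g+1-i}$ gives a $2\times2$ linear system for $(a_i,a_{g+1-i})$ whose determinant is $a_1^{g+1}-a_g^{g+1}$. The condition you need is therefore $a_1^{g+1}\neq a_g^{g+1}$, not $a_1a_g\neq 0$; and by Theorem \ref{thm_u}(i) (since $2^{g-1}u_1^2-u_g^{g+1}=2^{g-1}(a_1^{g+1}-a_g^{g+1})^2$) the determinant vanishes exactly when $V_4\emb\bAut(\X_g)$, i.e.\ exactly when $|\Aut|>4$ --- this is the step your hypothesis buys you. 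Your proposed treatment of the boundary would also fail as stated: $a_1=0\neq a_g$ is perfectly compatible with $\Aut\iso V_4$ (for $g=3$ the condition $2u_1=\pm u_3^2$ forces $a_3=0$ as well, so $a_1=0$ alone creates no extra symmetry), and must instead be handled directly, which is easy since then $u_g=0$, $u_1=a_g^{g+1}\neq0$ and $u_i=a_g^{g-i+1}a_{g-i+1}$ recover all coefficients; only $a_1=a_g=0$ needs excluding, and it is excluded because it kills every $u_i$ and forces $|\Aut(\X_g)|>4$ (the map $X\mapsto \zeta X$, $\zeta^{2}=-1$ for $g=3$, is then an extra automorphism). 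Finally, the residual ambiguity in the ordered pair $(a_1,a_g)$ comes from the coordinate change $X\mapsto 1/X$, not from the choice between $\a_1$ and $\a_2=\a_0\a_1$: those two involutions induce the same map on the $X$-line and the same normal equation. None of these repairs changes your architecture, but as written the bookkeeping you flag as ``the main obstacle'' is resolved by the wrong conditions.
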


\begin{theorem} \label{thm_u}
Let $\X_g $ be a genus $g$ hyperelliptic curve with an extra involution and $(u_1,\dots,u_g)$ its
corresponding dihedral invariants.

i) If $V_4 \emb \bAut(\X_g)$ then $2^{g-1}\,u_1^2 = u_g^{g+1}$.

ii) Moreover, if $g$ is odd then $V_4 \emb \bAut(\X_g)$ implies that
$$\left(2^r\,u_1 - u_g^{r+1}\right) \, \left(2^r\,u_1 + u_g^{r+1}\right)=0$$
where $r=\frac {g-1} 2$. The first factor corresponds to the case when involutions of $V_4\emb
\bAut(\X_g)$ lift to involutions in $\Aut(\X_g)$. The second factor corresponds to the case when
one of the two involutions lifts to an element of order 4 in $\Aut(\X_g)$.
\end{theorem}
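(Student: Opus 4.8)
The plan is to exploit the normal equation $Y^2 = X^{2g+2} + a_g X^{2g} + \cdots + a_1 X^2 + 1$ attached to the extra involution $\a_1$, which acts by $(X,Y)\mapsto(-X,Y)$, so that $\bar\a_1 : X\mapsto -X$ on $\mathbb P^1$; from the definitions one reads off $u_1 = a_1^{g+1} + a_g^{g+1}$ and $u_g = 2\,a_1 a_g$. Assuming $V_4 \emb \bAut(\X_g)$, I would first arrange (replacing $\a_1$ by another extra involution if necessary) that $\bar\a_1$ lies in the given copy of $V_4$, so that there is an involution $\beta \in \bAut(\X_g)$ commuting with $\bar\a_1$ and with $\{1,\bar\a_1,\beta,\bar\a_1\beta\}\iso V_4$.

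Next I would identify $\beta$ as a fractional linear transformation. Writing $\bar\a_1 = [\mathrm{diag}(1,-1)]$ in $\mathrm{PGL}_2(\C)$, any element commuting with it is represented by a diagonal or an anti-diagonal matrix; the diagonal involutions reduce to $\bar\a_1$ itself, so $\beta : X\mapsto \mu/X$ for some $\mu\in\C^{\ast}$. A lift $\tilde\beta\in\Aut(\X_g)$ of $\beta$ must then act by $(X,Y)\mapsto(\mu/X,\ \nu Y/X^{g+1})$, and preservation of $Y^2=f(X)$ is equivalent to $\nu^2 f(X) = X^{2g+2}f(\mu/X)$. Expanding the right side as $\sum_k a_{g+1-k}\mu^{2(g+1-k)}X^{2k}$ (with $a_0=a_{g+1}=1$) and comparing coefficients of $X^{2g+2}$ and $X^0$ forces $\nu^2 = 1$ and $\mu^{2(g+1)}=1$, while the remaining coefficients force $a_{g+1-k} = \mu^{2k}a_k$ for all $k$; in particular $a_g = \mu^2 a_1$.

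From here part (i) is a substitution: $a_g^{g+1} = \mu^{2(g+1)}a_1^{g+1} = a_1^{g+1}$, so $u_1 = 2a_1^{g+1}$ and $u_g = 2\mu^2 a_1^2$, whence $u_g^{g+1} = 2^{g+1}\mu^{2(g+1)}a_1^{2(g+1)} = 2^{g+1}a_1^{2(g+1)} = 2^{g-1}u_1^2$. For part (ii), with $g$ odd and $r=\tfrac{g-1}{2}$ one has $2r=g-1$ and $2(r+1)=g+1$, so the displayed product is exactly $2^{g-1}u_1^2 - u_g^{g+1}$ and vanishes by (i). To read off which factor vanishes I would compute $\tilde\beta^2$, which fixes $X$ and sends $Y\mapsto \nu^2 Y/\mu^{g+1}=Y/\mu^{g+1}$; since $\mu^{2(g+1)}=1$ we have $\mu^{g+1}=\pm1$. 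If $\mu^{g+1}=1$ then $\tilde\beta$ is an involution (and so, by the same computation and using that $g+1$ is even, is the analogous lift of $\bar\a_1\beta$), and $2^r u_1 = 2^{r+1}a_1^{g+1} = 2^{r+1}a_1^{g+1}\mu^{g+1} = u_g^{r+1}$: the first factor vanishes. If $\mu^{g+1}=-1$ then $\tilde\beta^2 = \a_0$, so $\tilde\beta$ has order $4$, and $2^r u_1 = -u_g^{r+1}$: the second factor vanishes. This is the dictionary asserted in (ii).

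The coefficient bookkeeping in the middle paragraph is routine. The two places that will need genuine care are the reduction step ``$\bar\a_1$ may be taken inside the given $V_4$'' --- this is where the non-uniqueness of the normal equation and the global structure of $\bAut(\X_g)$ intervene, and I expect it to be the main obstacle --- and the degenerate locus $a_1=0$, where $a_g = \mu^2 a_1 = 0$ forces $u_1=u_g=0$, so that both factors vanish simultaneously and the clean correspondence with the lifting behaviour degenerates; this case (of a curve with even larger automorphism group) should be isolated and treated separately.
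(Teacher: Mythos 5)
The paper itself gives no proof of this theorem---it defers to the reference [GS]---so there is nothing internal to compare against; your argument is, in substance, the standard one and it is correct. The coefficient bookkeeping checks out: from $\nu^2 f(X)=X^{2g+2}f(\mu/X)$ one gets $\nu^2=1$, $\mu^{2(g+1)}=1$ and $a_{g+1-k}=\mu^{2k}a_k$, hence $u_1=2a_1^{g+1}$ and $u_g=2\mu^2a_1^2$, and both part (i) and the factor dichotomy in part (ii) follow from $\tilde\beta^2=(X,Y)\mapsto(X,Y/\mu^{g+1})$ with $\mu^{g+1}=\pm1$ exactly as you describe. The one place your plan should be adjusted is the reduction step: do not replace $\a_1$ by another extra involution, since that changes the normal equation and therefore the tuple $(u_1,\dots,u_g)$ that the conclusion is about. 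Instead keep $\a_1$ and use the classification of finite subgroups of $PGL_2(\C)$: a subgroup containing a Klein 4-group is dihedral of order divisible by $4$ or one of $A_4$, $S_4$, $A_5$, and in each of these every involution is contained in \emph{some} Klein 4-group; applied to $\bar\a_1\in\bAut(\X_g)$ this produces a $V_4$ containing $\bar\a_1$, so your computation goes through for the given normal form. Your isolation of the degenerate locus $a_1=0$ (forcing $a_g=0$, $u_1=u_g=0$, both factors vanishing) is the right way to handle the remaining case.
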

For the proofs of these statements see \cite{GS}.

From a computational point of view, determining the normal equation of a given curve with an extra
automorphism can be done simply by solving a system of equations; this is quite efficient both
theoretically and in practice. If an extra involution $\a=X \to \frac{aX+b}{cX+d}$ is known
explicitly, one can easily find $\s$ such that $\a^\s=X \to -X$, then the equation after applying
$\s^{-1}$ has the form $Y^2=F(X^2)$. One more substitution $X \to \l\,X$ for certain $\l$ will
provide the normal equation.

\subsection{Genus 2 case}
The case $g=2$ has been studied in \cite{SV1}. Every point in $\M_2$ is a triple $(i_1, i_2, i_3)$ of absolute
invariants, see \cite{SV1} for details. The curve of genus 2 with extra involutions has equation
$$Y^2 = X^6+ a_2 X^4 + a_1 X^2+1.$$
We denote its dihedral invariants by
\begin{equation}
\u  := a_1^3 + a_2^3, \, \, \textit{   and    } \, \,  \v  := 2 a_1 a_2.
\end{equation}
The following lemma is proved in [\cite[pg.710]{SV1}.

\begin{lemma}
Let $\X_2$ be a genus 2 curve such that $G:=\Aut(\X_2)$ has an extra involution and $(\u,\v)$ its
dihedral invariants. Then,\\

a) $G\iso \Z_3 \rtimes D_8$ if and only if $(\u,\v)=(0,0)$ or $(\u,\v)=(6750,450)$.

b) $G\iso GL_2(3)$ if and only if $(\u,\v) = (-250,50)$.

c) $G\iso D_{12}$ if and only if $\ \v^2 - 220 \v -16 \u +4500=0$ for $\v \neq 18,\,
140+60\sqrt{5},\, 50$.

d) $G\iso D_8$ if and only if $\ 2\u^2-\v^3=0$, for $\v \neq 2, 18, 0, 50, 450$. Cases $\v =
0,\,450$ and $\u=50$ are reduced to cases a) and b) respectively.
\end{lemma}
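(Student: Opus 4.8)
The plan is to reduce the classification of automorphism groups of genus 2 curves with extra involution to polynomial conditions on the dihedral invariants $(\u,\v)$, using the known list of automorphism groups for genus 2 curves together with the fact (Theorem \ref{thm1} and its corollary, specialized to $g=2$) that $(\u,\v)$ determines $\X_2$ up to isomorphism once $\Aut(\X_2)\iso V_4$. First I would recall the complete list of groups that arise for genus 2 curves with extra involution: besides the generic $V_4$, the possibilities are $D_8$, $D_{12}$, $GL_2(3)$, and $\Z_3\rtimes D_8$ (of order $24$); these are exactly the groups properly containing $V_4$ that occur in genus 2. For each such $G$, a curve with $\Aut(\X_2)\iso G$ has more reduced involutions than the generic member of $\L_2$, hence carries more than one normal equation and more than one tuple of dihedral invariants; the extra symmetry forces algebraic relations among the coefficients $a_1,a_2$, which translate into relations among $\u=a_1^3+a_2^3$ and $\v=2a_1a_2$.

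Next I would carry out the translation explicitly for the two lower cases. For $G\iso D_8$: the extra reduced involutions of the curve act on the six roots, and imposing that $Y^2=X^6+a_2X^4+a_1X^2+1$ admit a further order-2 (or order-4) automorphism of the form $X\to \mu/X$ or similar forces the symmetry $a_1=a_2$ (or a finite list of sporadic configurations), which upon substituting into $\u,\v$ yields $2\u^2=\v^3$; one then checks by direct substitution which values of $\v$ correspond to the larger groups $D_{12}$, $GL_2(3)$, $\Z_3\rtimes D_8$ and must therefore be excluded, giving the exceptional list $\v\neq 2,18,0,50,450$, with $\v=0,450$ falling into case (a) and $\u=50$ into case (b). For $G\iso D_{12}$: the curve decomposes with an order-3 reduced symmetry, so after a coordinate change the sextic has the shape $X^6+aX^3+1$ up to the $V_4$-normalization; expressing the corresponding $a_1,a_2$ and eliminating $a$ produces the conic $\v^2-220\v-16\u+4500=0$, and again one substitutes the special $\v$-values belonging to $GL_2(3)$ and $\Z_3\rtimes D_8$ to read off the exclusions $\v\neq 18,\,140+60\sqrt5,\,50$.

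For the two rigid cases (a) and (b), the curve is unique up to isomorphism, so it suffices to write down a model with the prescribed group — the standard $Y^2=X^6-1$ (and its companion) for $\Z_3\rtimes D_8$, and $Y^2=X(X^4-1)$ (equivalently $Y^2=X^5-X$) for $GL_2(3)$ — put each into normal form, and compute $(\u,\v)$ directly; this gives $(0,0)$ and $(6750,450)$ in case (a) and $(-250,50)$ in case (b). The converse direction in each case is immediate from the corollary to Theorem \ref{thm1}: a prescribed pair of dihedral invariants determines the curve, hence its automorphism group. The main obstacle I anticipate is not conceptual but bookkeeping: correctly identifying, for each group $G$ with $V_4\emb G$, the precise coordinate normalization that exhibits the extra symmetry, and then tracking which finitely many specializations of $\v$ cause the locus of $G$ to meet the locus of a strictly larger group — these overlaps are exactly what produces the exceptional value lists in (c) and (d), and getting that finite list of sporadic values right (e.g. distinguishing genuine $D_8$-points from $D_{12}$- or order-$24$-points lying on the curve $2\u^2=\v^3$) requires careful case analysis rather than a single computation.
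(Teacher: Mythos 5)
The first thing to note is that the paper does not actually prove this lemma: it is quoted verbatim with the citation ``proved in \cite[pg.~710]{SV1}'', so there is no in-paper argument to compare against. Your reconstruction follows the strategy one would expect (and which the cited reference uses): take Bolza's list of genus-2 automorphism groups containing $V_4$, normalize each family, express the constraints in terms of $\u=a_1^3+a_2^3$ and $\v=2a_1a_2$, and then sort out the finitely many exceptional values. The headline computations are right: $a_1=a_2$ does give $2\u^2=\v^3$ (indeed $2(a_1^3+a_2^3)^2=8a_1^3a_2^3$ forces $a_1^3=a_2^3$), the $D_{12}$ family $Y^2=X(\ldots)$, rather $Y^2=X^6+aX^3+1$, does trace out the stated conic after elimination, and the rigid models $Y^2=X^6-1$ and $Y^2=X^5-X$ do yield $(0,0)/(6750,450)$ and $(-250,50)$ respectively.

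Two points in your sketch are genuinely off or underpowered, and both concern exactly the part you flagged as ``bookkeeping''. First, you attribute all the excluded $\v$-values to overlaps with larger automorphism groups. That accounts for $\v=0,450$ (the order-24 curve), $\v=50$ (the $GL_2(3)$ curve), and $\v=18$ is partly of a different nature: on $2\u^2=\v^3$, the value $\v=18$ with $a_1=a_2=3$ gives $X^6+3X^4+3X^2+1=(X^2+1)^3$, and $\v=2$ with $a_1=a_2=-1$ gives $(X^2-1)^2(X^2+1)$; likewise $\v=140+60\sqrt5$ on the $D_{12}$ conic comes from $a=\pm2$, i.e.\ $(X^3\pm1)^2$. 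These are \emph{discriminant-zero} points where the normal equation no longer defines a genus-2 curve, not larger-group points, so a correct proof must separately intersect each locus with the vanishing locus of the discriminant. Second, your converse direction leans on the Corollary to Theorem~\ref{thm1}, but that corollary is stated only for curves with automorphism group exactly $V_4$; at the special points the assignment curve $\mapsto(\u,\v)$ is multivalued (that is precisely why the single order-24 curve produces the two tuples $(0,0)$ and $(6750,450)$, one on the $D_{12}$ conic and one not). So for the ``only if'' statements in (c) and (d) you must check that \emph{every} class of elliptic involutions of a $D_8$- or $D_{12}$-curve yields a tuple satisfying the stated relation, not just the one used for normalization. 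Neither issue is fatal, but as written the proposal would not close these cases without additional work.
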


Notice that the parameters $u= \displaystyle\frac {\v} 2$ and $v=\u$ instead of $\u, \v$ are used in \cite{SV1}.
The mapping
$$\Phi: (\u,\v) \to (i_1, i_2, i_3)$$
gives a birational parametrization of $\L_2$. The dihedral invariants $\u, \v$ are given explicitly as rational
functions of $i_1, i_2, i_3$, see \cite{SV1}. For $g=2$, the curve $Y^2=X^6-X$ is the only genus 2 curve (up to
isomorphism) which has extra automorphisms and is not in $\L_2$. The automorphism group in this case is $\Z_{10}$.
Relations between elliptic subcovers of such $\X_2$ were studied in detail in \cite{SV1}.

\subsection{Invariants of binary forms}
In this section we define the action of $GL_2(k)$ on binary forms and discuss the basic notions of
their invariants. Let $k[X,Z]$ be the polynomial ring in two variables and let $V_d$ denote the
$(d+1)$-dimensional subspace of $k[X,Z]$ consisting of homogeneous polynomials
$$f(X,Z) = a_0X^d + a_1X^{d-1}Z + ... + a_dZ^d$$
of degree $d$. Elements in $V_d$ are called \emph{binary forms} of degree $d$. We let $GL_2(k)$ act
as a group of automorphisms on $ k[X, Z] $ as follows:
\begin{equation}
\mbox{for} \quad
\begin{pmatrix} a &b \\ c & d \end{pmatrix}
\in GL_2(k)\ , \quad M
\begin{pmatrix} X \\ Z \end{pmatrix} =
\begin{pmatrix} aX+bZ \\ cX+dZ \end{pmatrix}.
\end{equation}
This action of $GL_2(k)$ leaves $V_d$ invariant and acts irreducibly on $V_d$.

Let $A_0,\ A_1,\ldots,A_d$ be coordinate functions on $V_d$. Then the coordinate ring of $V_d$ can
be identified with $k[A_0,\ldots,A_d]$. For any $I \in k[A_0,\ldots,A_d]$ and $M \in GL_2(k)$, we
define $I^M \in k[A_0,\ldots,A_d]$ as follows:
$${I^M}(f):= I(M(f))$$
for all $f \in V_d$. Then this equation and $I^{MN} = (I^{M})^N$ define an action of $GL_2(k)$ on
$k[A_0,\ldots,A_d]$.
A homogeneous polynomial $I\in k[A_0,\dots,A_d,X,Z]$ is called a \emph{covariant} of index $s$ if
$$I^M(f)=\delta^s I(f)$$
where $\delta =\det(M)$. The homogeneous degree in $a_1,\dots,a_n$ is called the \emph{degree} of
$I$, and the homogeneous degree in $X,Z$ is called the \emph{order} of $I$. A covariant of order
zero is called \emph{invariant}. An invariant is a $SL_2(k)$-invariant on $V_d$.

We will use the symbolic method of classical theory to construct covariants of binary forms. Let
$$f(X,Z):=\sum_{i=0}^n \begin{pmatrix} n \\ i \end{pmatrix} a_i X^{n-i} \, Z^i, \qquad
\texttt{}g(X,Z):=\sum_{i=0}^m \begin{pmatrix} m \\ i \end{pmatrix} b_i X^{n-i} \, Z^i$$
be binary forms of degree $n$ and $m$ respectively with coefficients in $k$. We define the
\emph{r-transvection}
\begin{small}
$$(f,g)^r := d \ \sum_{k=0}^r (-1)^k\begin{pmatrix} r \\ k \end{pmatrix} \cdot
\frac {\partial^r f} {\partial X^{r-k}\ \partial Z^k} \cdot \frac {\partial^r g} {\partial X^k\
\partial Z^{r-k} }$$
\end{small}
where
$d=\frac {(m-r)! \, (n-r)!} {n! \, m!}.$
Then $(f,g)^r$ is a homogeneous polynomial in $k[X,Z]$ and therefore a covariant of order $m+n-2r$ and degree 2.
In general, the $r$-transvection of two covariants of order $m,n$ (resp. degree $p, q$) is a covariant of order
$m+n-2r$ (resp. degree $p+q$). See \cite{Al}, \cite{Cl}, \cite{H}, \cite{KS} for details.

\section{Hyperelliptic curves of genus three}
In this section we study hyperelliptic curves of genus 3 with extra involutions. Let $\X_3$ be such
a curve. Then, $\X_3$ has normal equation
$$Y^2 = X^8 + a_3\,X^6 + a_2\,X^4 + a_1\,X^2 + 1,$$
see \cite{GS}. The dihedral invariants of $\X_3$ are
$$u_1=a_1^4+a_3^4 \, \, \quad u_2=(a_1^2+a_3^2)\,a_2 \, \, \quad u_3=2\,a_1\,a_3.$$
If $a_1=a_3=0$, then $u_1=u_2=u_3=0$. In this case $w:=a_2^2$ is invariant. Thus, we define
\begin{equation} \label{def_u}
\begin{split}
\u (\X_3) \, = \left\{ \aligned & w & \mbox{if} \quad a_1=a_3=0,\\
 & (u_1, w, u_3) & \mbox{if} \quad a_1^2+a_3^2=0\ \mbox{and} \, \, a_2\neq 0,\\
 & (u_1, u_2, u_3) & \mbox{otherwise}. \\ \endaligned
\right.
\end{split}
\end{equation}
To have an explicit way of describing a point in the moduli space of hyperelliptic curves of genus 3 we need the
generators of the field of invariants of binary octavics. These invariants are described in terms of covariants of
binary octavics. Such covariants were first constructed by van Gall who showed that the graded ring of covariants
is generated by 70 covariants and explicitly constructed them, see \cite{Shi1}.

Let $f(X,Y)$ be the binary octavic
$$f(X,Y) = \sum_{i=0}^8 a_i X^i Y^{8-i}.$$
We define the following covariants:
\begin{equation}
\begin{split}
 & g=(f,f)^4, \quad k=(f, f )^6, \quad h=(k,k)^2, \quad m=(f,k)^4, \\
 & n=(f,h)^4, \quad p=(g,k)^4, \quad q=(g, h)^4. \\
\end{split}
\end{equation}

Then the following
\begin{equation}\label{J}
\begin{split}
&J_2=(f,f)^8,\quad J_3=(f,g )^8,\quad J_4=(k,k)^4,\quad \\ & J_5=(m,k)^4,\quad J_6 = (k,h )^4,
\quad J_7= (m,h)^4
\end{split}
\end{equation}
are $SL_2(k)$-invariants. Shioda has shown that the ring of invariants is a finitely generated module of
$k[J_2,\dots,J_7]$, see \cite{Shi1}. The expressions of these covariants are very large in terms of the
coefficients of the curve and difficult to compute. However, in terms of the dihedral invariants $u_1,u_2,u_3$
these expressions are smaller. Analogously, $J_{14}$ is the discriminant of the octavic. We define $M:=2u_1+u_3^2$
and assume $M\neq 0$.


\begin{small}

\[ 
\begin{split}
J_2 = & \, \frac 1 M \, \, (560u_1 + 280u_3^2 + 10u_3u_1 + 5u_3^3 + 2u_2^2)\\  \\
J_3 = & \, \frac {u_2 } {a_2 M^2} \, (12u_2^3 + 4200u_1^2 + 4200u_1u_3^2 + 1050 u_3^4 -110u_3u_2u_1-55u_3^3u_2 \\
&+ 7840u_2u_1 + 3920u_2u_3^2) \\  \\
J_4 = &\, \frac {32}{M^2}\,(2u_2^4-1568u_2^2u_1-784u_2^2u_3^2 + 1008u_2u_1^2 + 1008u_2u_1u_3^2+ 252u_2u_3^4  \\
 & + 8u_1^2u_3^2+307328u_1^2 + 307328u_1u_3^2 + 76832u_3^4+62u_3u_2^2u_1+ 31u_3^3u_2^2\\
 &-784u_3u_1^2+8u_1u_3^4+2u_3^6 -784u_3^3u_1-196u_3^5) \\ \\
J_5 = & \, - \frac {16 u_2}{ a_2 M^3}\,(104u_2u_1^2u_3^2-614656u_2u_1u_3^2-41160u_3^6-614656u_2u_1^2- 153664u_2u_3^4\\
& -246960u_1u_3^4-2296u_2^2u_1u_3^2 +104u_2u_1u_3^4-41552u_3u_2u_1^2-41552u_3^3u_2u_1\\
& +26u_2^3u_3u_1+ 1568u_2^3u_3^2 +13u_2^3u_3^3 + 26u_2u_3^6 - 2296u_2^2u_1^2-574u_2^2u_3^4 -10388u_3^5u_2 \\
& + 1120u_3u_1^3 + 840u_3^5u_1 -4u_2^5 -329280u_1^3  + 140u_3^7 -493920u_1^2u_3^2+3136u_2^3u_1 \\
& + 1680u_3^3u_1^2) \\  \\
J_6 &=  \, -\frac {256} {M^3} \,(2u_3u_1 + u_3^3-392u_1-196u_3^2+u_2^2) (-2u_2^4- 8u_1^2u_3^2-8u_1u_3^4-2u_3^6\\
&+ 154u_3u_2^2u_1+77u_3^3u_2^2 + 1568u_2^2u_1 + 784u_2^2u_3^2 + 3024u_2u_1^2 + 3024u_2u_1u_3^2\\
&+ 756u_2u_3^4 + 10192u_3u_1^2 + 2548u_3^5- 307328u_1^2-307328u_1u_3^2 \\
& -76832u_3^4+ 10192u_3^3u_1) \\  \\
J_7 &=  \, \frac {64 u_2} {a_2 M^4}\, (129077760 u_3^6 u_1 - 481890304 u_2 u_1^3 +516311040u_1^3u_3^2 \\
& +387233280u_1^2u_3^4 + 921984u_2^3u_3^4 + 7299040u_3^7u_2-90u_2^5u_3^3  -14896u_2^4u_1^2\\
& -3724u_2^4u_3^4+3360u_3^6u_1^2+1120u_3^8u_1+141120u_2u_1^4+4480u_1^3u_3^4+16134720u_3^8 \\
& + 2086u_3^7u_2^2 + 345u_2^3u_3^6+38u_3^9u_2  + 3687936u_2^3u_1^2 -68600u_3^9 -25480u_2u_3^8 \\
& + 5180672u_2^2u_1^3 + 647584u_2^2u_3^6-1097600u_3u_1^4+ 8u_2^7 + 140u_3^{10} + 258155520u_1^4 \\
 & -1646400u_3^5 u_1^2-548800u_3^7 u_1-9408u_2^5u_1 -4704u_2^5u_3^2 -722835456u_2u_1^2u_3^2\\
 & +16688u_3u_2^2u_1^3 +304u_3^3u_2u_1^3 + 456u_3^5u_2u_1^2  -199920u_2u_1^2u_3^4+7840u_2u_1^3u_3^2\\
 & -14896u_2^4u_1u_3^2+ 25032u_3^3u_2^2u_1^2 +43794240u_3^5u_2u_1 +87588480u_3^3u_2u_1^2 \\
 & + 228u_3^7u_2u_1  +1380u_2^3u_3^4u_1-78400u_2^3u_3u_1^2 + 58392320u_3u_2u_1^3 +1380u_2^3u_3^2u_1^2\\
 & -135240u_2u_1u_3^6 +3885504u_2^2u_1u_3^4+ 3687936u_2^3u_3^2u_1  -60236288u_2u_3^6\\
 & +12516u_3^5u_2^2u_1 -78400u_2^3u_3^3u_1-361417728u_2u_1u_3^4 + 2240u_1^4u_3^2 \\
 & + 7771008u_2^2u_1^2u_3^2-19600u_2^3u_3^5 -180u_2^5u_3u_1-2195200u_3^3 u_1^3 ) \\
 \end{split}
\]

\[
\begin{split}
J_{14} & =  \, \frac {16} {M^4} \, (-1024u_3^4-64u_2^4-4096u_1^2-4096u_1u_3^2 -2304u_2u_1u_3^2 +6u_3^6 + 384u_3^5\\
& + 1024u_2^2u_1+ 512u_2^2u_3^2 -2304u_2u_1^2-576u_2u_3^4 + 456u_1^2u_3^2+ 132u_1u_3^4\\
 &  + 160u_3^3u_2^2 + 1536u_3u_1^2 + 1536u_3^3u_1 -2u_2^2u_1u_3^2 + 320u_3u_2^2u_1 - 144u_3u_2u_1^2 \\
 & -144u_3^3u_2u_1 + 32u_2^3u_1 + 16u_2^3u_3^2-u_2^2u_3^4 - 36u_3^5u_2 + 8u_3^3u_1^2 + 8u_3^5u_1 \\
 & + 432u_1^3+ 2u_3^7 )^2 \\ \\
\end{split}
\]

\end{small}

The next theorem is a direct corollary of Theorem \ref{thm1}. However, we provide a computational
proof.

\begin{theorem}\label{thm_3}
 $k(\L_3)=k(u_1, u_2 , u_3)$.
\end{theorem}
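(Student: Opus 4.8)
The plan is to give a direct, computational proof that the field of rational functions $k(\L_3)$ coincides with $k(u_1,u_2,u_3)$, exhibiting both inclusions explicitly. Since $\L_3$ is parametrized by the three coefficients $a_1,a_2,a_3$ of the normal equation $Y^2 = X^8 + a_3 X^6 + a_2 X^4 + a_1 X^2 + 1$ modulo the finite residual group action (the transformations $X \to -X$ and $X \to 1/X$ that preserve the normal form), we have $k(\L_3) = k(a_1,a_2,a_3)^{H}$ for the appropriate finite group $H$ generated by these coordinate changes. The inclusion $k(u_1,u_2,u_3) \subseteq k(\L_3)$ is immediate, since $u_1 = a_1^4+a_3^4$, $u_2 = (a_1^2+a_3^2)a_2$, $u_3 = 2a_1a_3$ are visibly invariant under $X \to 1/X$ (which swaps $a_1 \leftrightarrow a_3$ up to the overall scaling already normalized away) and under $X \to -X$ (which fixes all $a_i$, the octavic being even).

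For the reverse inclusion $k(\L_3) \subseteq k(u_1,u_2,u_3)$, the key point is to recover $a_1,a_2,a_3$ — or rather the residual-group orbit of $(a_1,a_2,a_3)$ — from $u_1,u_2,u_3$ as rational functions. First I would observe that $a_1^2+a_3^2$ and $a_1 a_3$ generate the field of symmetric functions of $a_1^2,a_3^2$, and that $a_1^2+a_3^2$ is recoverable from $u_1$ and $u_3$: indeed $u_1 = (a_1^2+a_3^2)^2 - 2(a_1a_3)^2 = (a_1^2+a_3^2)^2 - \tfrac12 u_3^2$, so $(a_1^2+a_3^2)^2 = u_1 + \tfrac12 u_3^2$. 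This only gives $a_1^2+a_3^2$ up to sign, which is exactly the ambiguity corresponding to an extra element of the residual group (or to passing between the normal equation and a twist); so on the open locus this is harmless. Then $a_2 = u_2/(a_1^2+a_3^2)$ is rational in the $u_i$ once $a_1^2+a_3^2 \neq 0$, and $\{a_1^2, a_3^2\}$ are the roots of $T^2 - (a_1^2+a_3^2)T + \tfrac14 u_3^2$, hence the elementary symmetric functions of $(a_1^2,a_3^2)$ — equivalently the $D_n$-orbit data needed to pin down the point of $\L_3$ — lie in $k(u_1,u_2,u_3)$. This shows $k(a_1,a_2,a_3)$ is algebraic over $k(u_1,u_2,u_3)$ of degree equal to $|H|$, and since $[k(a_1,a_2,a_3):k(\L_3)] = |H|$ as well, we get $k(\L_3) = k(u_1,u_2,u_3)$.

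To make the argument fully rigorous without tracking the residual group by hand, the cleaner route is a transcendence-degree and degree count: $\L_3$ is $3$-dimensional (stated in the introduction and recalled above), so $\mathrm{trdeg}_k k(\L_3) = 3$; the three functions $u_1,u_2,u_3$ are algebraically independent over $k$ because the Jacobian of $(a_1,a_2,a_3) \mapsto (u_1,u_2,u_3)$ is generically nonsingular (a one-line determinant computation: it equals a nonzero multiple of $a_1 a_3(a_1^2 - a_3^2)\cdots$, nonvanishing on a dense set). Hence $k(u_1,u_2,u_3) \subseteq k(\L_3)$ is an extension of fields of the same transcendence degree $3$, and it remains only to check it is not proper, i.e. that the extension $k(\L_3)/k(u_1,u_2,u_3)$ is trivial; this follows from the explicit rational reconstruction of $a_2$ and of the symmetric functions of $a_1^2, a_3^2$ displayed above, which shows $k(u_1,u_2,u_3) \supseteq k(a_1^2+a_3^2,\, a_1a_3,\, a_2) \supseteq k(\L_3)$, the last inclusion because $\L_3$'s function field is generated by precisely these residual-invariant quantities.

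The main obstacle I anticipate is bookkeeping around the degenerate strata where the naive formulas break down — namely $a_1 = a_3 = 0$ (where $u_1=u_2=u_3=0$ and one must instead use $w = a_2^2$, as already flagged in \eqref{def_u}) and the locus $a_1^2 + a_3^2 = 0$ with $a_2 \neq 0$ (where $u_2$ must be replaced by $w$). These are lower-dimensional, so they do not affect the birational statement, but a careful writeup should note that the rational map $\p \mapsto \u(\X_3)$ of \eqref{def_u} is the correct birational inverse and that the exceptional loci are exactly those handled by the case split. A secondary, purely cosmetic obstacle is that the explicit invariant formulas for $J_2,\dots,J_7,J_{14}$ displayed above, while reassuring as a consistency check (they are manifestly rational in $u_1,u_2,u_3$ up to the harmless factor $u_2/a_2 = a_1^2+a_3^2$ which is itself in $k(u_1,u_3)$ up to sign), are not actually needed for the proof of Theorem~\ref{thm_3} and should not be ground through; they belong to the later sections on specific automorphism-group loci.
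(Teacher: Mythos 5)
Your route is genuinely different from the paper's: the paper argues through the classical invariants of binary octavics, showing computationally that the absolute invariants built from $J_2,\dots,J_7$ lie in $k(u_1,u_2,u_3)$ and then solving the resulting system for the $u_i$, whereas you work directly with the normal-form coefficients and the residual group (which is essentially how Theorem~\ref{thm1} is proved in \cite{GS}; note the paper itself observes that Theorem~\ref{thm_3} is literally Theorem~\ref{thm1} with $g=3$). However, as written your argument has a genuine gap: you misidentify the residual group $H$. The transformations preserving the normal form $Y^2=X^8+a_3X^6+a_2X^4+a_1X^2+1$ are not just $X\mapsto -X$ (which acts trivially on the $a_i$) and $X\mapsto 1/X$ (the swap $a_1\leftrightarrow a_3$); they also include the scalings $X\mapsto \l X$ with $\l^8=1$, which act by $(a_1,a_2,a_3)\mapsto(\epsilon a_1,\epsilon^2 a_2,\epsilon^3 a_3)$ with $\epsilon=\l^2$ a fourth root of unity. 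With only the two generators you name, the fixed field would be $k(a_1+a_3,\,a_1a_3,\,a_2)$, which strictly contains $k(u_1,u_2,u_3)$, and the statement you are proving would then be false; the order-$8$ dihedral group generated by the swap together with these scalings is precisely what makes the $u_i$ the right invariants. Your invariance check for the inclusion $k(u_1,u_2,u_3)\subseteq k(\L_3)$ therefore only covers a proper subgroup of $H$ (the $u_i$ are in fact invariant under all of $H$, but this must be checked).

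The concluding chain is also wrong as written: $k(u_1,u_2,u_3)\supseteq k(a_1^2+a_3^2,\,a_1a_3,\,a_2)$ is false, since $a_1^2+a_3^2$ and $a_2$ each change sign under $\epsilon=i$ and hence cannot lie in any field of $H$-invariants; only $(a_1^2+a_3^2)^2$, $(a_1^2+a_3^2)a_2$ and $a_2^2$ do. The correct way to close your argument is the degree count you gesture at but never carry out with an actual value of $|H|$: from $a_3=u_3/(2a_1)$ and $u_1=a_1^4+a_3^4$ one gets $16a_1^8-16u_1a_1^4+u_3^4=0$, and $a_2=u_2/(a_1^2+a_3^2)$, so $[k(a_1,a_2,a_3):k(u_1,u_2,u_3)]\le 8=|H|=[k(a_1,a_2,a_3):k(\L_3)]$, which together with $k(u_1,u_2,u_3)\subseteq k(\L_3)$ forces equality. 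Finally, the identification $k(\L_3)=k(a_1,a_2,a_3)^{H}$ (uniqueness of the normal form up to $H$ for a generic point of $\L_3$) is itself the nontrivial content of Theorem~\ref{thm1} and its Corollary and should be cited rather than assumed silently; once it is cited, of course, the whole theorem follows immediately by specializing $g=3$.
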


\begin{proof}
Notice that $J_3, J_5, J_7$ have $a_2$ as a factor. However, this does not contradict Theorem
\ref{thm1}. The function field $k(\L_3)$ is generated by absolute invariants. To define such
absolute invariants one must raise $J_3, J_5, J_7$ to some power and therefore absolute invariants
will have only $a_2^2=\frac {2u_2^2} {2u_1+u_3^2}$ as a factor. Hence, computationally we have
shown that $i_1, \dots , i_5 \in k(u_1, u_2, u_3)$, as expected. Let
$$i_j = \frac {p_j (u_1, u_2, u_3)} {q_j (u_1, u_2, u_3)}$$
for $j=1,\dots,5$ and certain polynomials $p_i$, $q_i$. Then, we have the system of equations
$$i_j \cdot {q_j (u_1, u_2, u_3)} - {p_j (u_1, u_2, u_3)} =0, \quad j=1, \dots , 5.$$
We can solve for $u_1, u_2, u_3$ and express them as rational functions in $i_1, i_2, i_3$.
Therefore, $\ k(u_1, u_2, u_3) \, =\, k(\L_3)$.
\end{proof}

\begin{remark}
The expressions of $u_1, u_2, u_3$ as rational functions in $i_1, i_2, i_3$ are rather large and we
don't display them. Using the above equations, one can find explicit equations of the locus $\L_3$
in terms of $i_1,\dots,i_5$. However, such equations are very large and not practical to use.
Instead, using the dihedral invariants $u_1, u_2, u_3$ is much more convenient.
\end{remark}

\subsection{The locus of genus 3 hyperelliptic curve with prescribed automorphism group}
In this section we describe the locus of genus 3 hyperelliptic curves in terms of dihedral invariants or classical
invariants. First we briefly describe the list of groups that occur as automorphism groups of genus 3
hyperelliptic curves. This list has been computed by many authors; we refer to \cite{MS} for the correct result
and a complete list of references.

We denote by $U_6$, $V_8$ the following groups:
$$U_6 := \< x, y \ | \ x^2, y^6, x\, y\, x\, y^4 \>, \qquad
V_8 := \< x, y \ | \ x^4, y^4, \, (x\,y)^2, \, (x^{-1}y)^2 \>.$$
In Table 1 we list the automorphism groups of genus 3 hyperelliptic curves. The first column is the
case number, in the second column the groups which occur as full automorphism groups are given, and
the third column indicates the reduced automorphism group for each case. The dimension $\delta$ of
the locus and the equation of the curve are also given in the next two columns. The last column is
the \textsc{GAP} identity of each group in the library of small groups in \textsc{GAP}.

\begin{table*}[ht!]
\begin{small}
\caption{$\Aut(X_3)$ for hyperelliptic $X_3$}
 \vskip 0.2cm
\begin{center}
\begin{tabular}{||c|c|c|c|c|c||}
\hline \hline & & & & & \\
 & $\Aut(\X_g)$ & $\overline G$ & $\,\delta\,$ & equation $y^2= f(x)$ & Id. \\
 & & & & & \\
\hline \hline & & & & & \\
1 & $\Z_2$ & $\{1\}$ & 5 & $x(x-1)(x^5+ax^4+bx^3+cx^2+dx+e)$ & $(2,1)$ \\
 & & & & & \\
2 & $\Z_2\times\Z_2$ & $\Z_2$ & 3 & $x^8+a_3x^6+a_2x^4+a_1x^2+1$ & $(4,2)$ \\
3 & $\Z_4$ & $\Z_2$ & 2 & $x(x^2-1)(x^4+ax^2+b)$ & $(4,1)$ \\
4 & $\Z_{14}$ & $\Z_7$ & 0 & $x^7-1$ & $(14,2)$ \\
 & & & & & \\
5 & $\Z_2^3$ & $D_4$ & 2 & $(x^4+ax^2+1)(x^4+bx^2+1)$ & $(8,5)$ \\
6 & $\Z_2\times D_8$ & $D_8$ & 1 & $x^8+ax^4+1$ & $(16,11)$ \\
7 & $\Z_2\times \Z_4$ & $D_4$ & 1 & $(x^4-1)(x^4+ax^2+1)$ & $(8,2)$ \\
8 & $D_{12}$ & $D_6$ & 1 & $x(x^6+ax^3+1)$ & $(12,4)$ \\
9 & $U_6$ & $D_{12}$ & 0 & $x(x^6-1)$ & $(24,5)$ \\
10 & $V_8$ & $D_{16}$ & 0 & $x^8-1$ & $(32,9)$ \\
 & & & & & \\
11 & $\Z_2\times S_4$ & $S_4$ & 0 & $x^8+14x^2+1$ & $(48,48)$ \\
 & & & & & \\
\hline\hline
\end{tabular}
\end{center}
\end{small}
\end{table*}

\begin{remark}
Note that $\Z_2$, $\Z_4$ and $\Z_{14}$ are the only groups which don't have extra involutions.
Thus, curves with automorphism group $\Z_2$, $\Z_4$ or $\Z_{14}$ do not belong to the locus $\L_3$.
\end{remark}

We want to describe each of the above subloci and study inclusions among them. In order to study
such inclusions the lattice of the list of groups needs to be determined. The lattice of the groups
for genus 3 is given in Fig. 1. Each group is presented by its \textsc{GAP} identity. Each level
contains cases with the same dimension (i.e., the bottom level correspond to the 0-dimensional
families). The boxed entries correspond to groups with extra involutions.

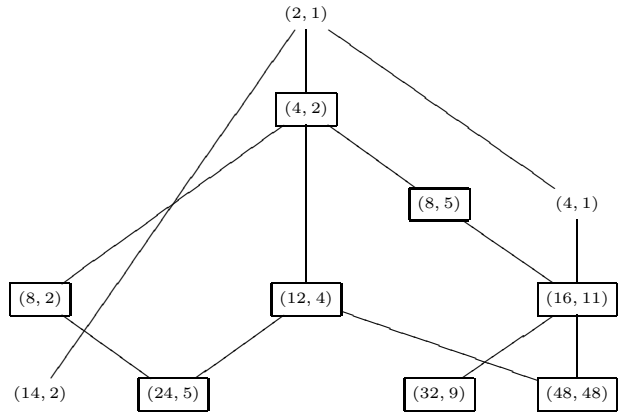
\begin{figure}[ht!]
\begin{tiny}
$$\xymatrix{
  &  &  (2,1) \ar@{-}[d] \ar@{-}[ddrr] \ar@{-}[ddddll]  \\
  &  &  *+[F]{(4,2)} \ar@{-}[dd] \ar@{-}[dr] \ar@{-}[ddll]  \\
  &  &  &  *+[F]{(8,5)} \ar@{-}[dr] &(4,1) \ar@{-}[d]  \\
*+[F]{(8,2)} \ar@{-}[dr]  &  &  *+[F]{(12,4)}\ar@{-}[dl] \ar@{-}[drr]  &  &  *+[F]{(16, 11)}\ar@{-}[dl] \ar@{-}[d]  \\
(14, 2)  &  *+[F]{(24, 5)}  &  &  *+[F]{(32, 9)}  &  *+[F]{(48, 48)}  \\
}$$
\end{tiny}
\caption{The lattice of automorphism groups}
\end{figure}

Next we determine the algebraic relations among dihedral invariants for each case in the diagram.

\begin{theorem}\label{thm_4}
The algebraic relations of dihedral invariants for each case of Figure 1 are presented in Figure 2.
\noindent \begin{tiny}
\begin{figure*}
\hspace{-0.5cm} $ \xymatrix{
  &  &  *+[F]{(u_1, u_2, u_3)} \ar@{-}[dd] \ar@{-}[dr] \ar@{-}[ddl]  \\
  &  &  &  *+[F]{2u_1-u_3^2=0} \ar@{-}[d]  \\
  &  *+[F]{2u_1+u_3^2=0} \ar@{-}[d]  &  *+[F]{Eq.~\eqref{d_u_12}}\ar@{-}[dl] \ar@{-}[dr]  &  *+[F]{a_1=a_3 }\ar@{-}[dl] \ar@{-}[d]  \\
  &  *+[F]{u_2=0}  &  *+[F]{a_1=a_2=a_3=0}  &  *+[F]{(0, 196, 0),\left(\frac {8192} {81}, - \frac {1280} {27}, \frac {128} 9 \right)}  \\
}$
\caption{Corresponding relations of dihedral invariants}
\end{figure*}
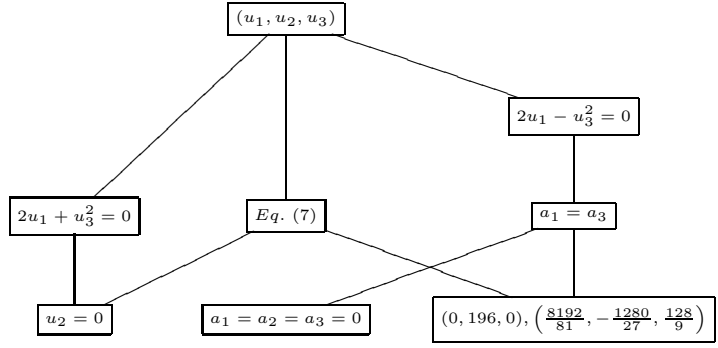
\end{tiny}
\end{theorem}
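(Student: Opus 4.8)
The plan is to go case by case through the lattice in Figure 1, identifying for each group $G$ with $V_4\emb G$ the sublocus of $\L_3$ where $\Aut(\X_3)\cong G$, and then translating the defining conditions on the normal equation $Y^2=X^8+a_3X^6+a_2X^4+a_1X^2+1$ into conditions on the dihedral invariants $u_1=a_1^4+a_3^4$, $u_2=(a_1^2+a_3^2)a_2$, $u_3=2a_1a_3$. The starting point is Theorem \ref{thm_u}: since every $\X_3\in\L_3$ has $V_4\emb\bAut(\X_3)$ only in the degenerate cases, one first observes that the generic point of $\L_3$ (case $(4,2)$, group $\Z_2\times\Z_2$) is just the unrestricted triple $(u_1,u_2,u_3)$, while the larger groups all arise when a second (conjugacy class of) involution appears in $\bAut$, i.e. exactly when part (i) or part (ii) of Theorem \ref{thm_u} applies with $g=3$, $r=1$. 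Part (i) gives $4u_1^2=u_3^4$, which factors as $(2u_1-u_3^2)(2u_1+u_3^2)=0$; these are precisely the two branches in Figure 2.

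First I would treat the branch $2u_1+u_3^2=0$. By the second half of Theorem \ref{thm_u}(ii) this is the locus where one of the two extra involutions lifts to an element of order $4$, so $\Z_4$ (or rather $\Z_2\times\Z_4$, GAP id $(8,2)$) sits inside $\Aut(\X_3)$; I would confirm by direct substitution that on this branch the curve is isomorphic to one of the form $(X^4-1)(X^4+aX^2+1)$ (case 7 of Table 1), which matches the equation column, and that the sub-branch $u_2=0$ forces $a_1=a_3=0$ (using $u_3^2=-2u_1$ together with $u_3=2a_1a_3$ and $u_1=a_1^4+a_3^4$, so $a_1^2=-a_3^2$), giving the $D_{12}$-type curve — wait, more carefully, $u_2=0$ with $a_1^2+a_3^2=0$ but $a_2\neq0$ is the $(u_1,w,u_3)$ regime of \eqref{def_u}, landing in case 8, group $D_{12}=(12,4)$. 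On the branch $2u_1-u_3^2=0$, Theorem \ref{thm_u}(ii) first factor says both involutions of $V_4$ lift to involutions, so $\Z_2^3\emb\Aut(\X_3)$; this is case 5, $(8,5)$, with curve $(X^4+aX^2+1)(X^4+bX^2+1)$, and I would derive the explicit relation — this is what the paper calls Eq.~\eqref{d_u_12} — by writing $a_1=a_3$ (equivalently $u_3=2a_1^2$, and then $2u_1=u_3^2$ is automatic) versus the other factorization; the sub-branches are $a_1=a_3$, whose degeneracies $a_1=a_2=a_3=0$ (the point $\X_3:Y^2=X^8+1$, group $V_8=(32,9)$) and the two sporadic points $(0,196,0)$ and $\bigl(\tfrac{8192}{81},-\tfrac{1280}{27},\tfrac{128}{9}\bigr)$ (groups $U_6=(24,5)$ and $\Z_2\times S_4=(48,48)$, from $X(X^6-1)$ and $X^8+14X^2+1$ respectively) are found by imposing the full $\Aut$ and computing $(u_1,u_2,u_3)$.

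Concretely, the key computational steps in order are: (1) for each curve equation $y^2=f(x)$ in the rows of Table 1 that carry $V_4\emb\Aut$, bring it to normal form, read off $(a_1,a_2,a_3)$ as functions of the equation's parameters, and compute $(u_1,u_2,u_3)$; (2) eliminate the parameters to get the algebraic relation(s) cutting out that sublocus — for the $1$-dimensional loci this is a single polynomial relation, for the $0$-dimensional ones an explicit point; (3) check consistency with the lattice, i.e. that the relations for groups lower in Figure 1 imply those for the groups above them, and that passing to a sub-branch corresponds exactly to the extra vanishing/specialization observed in \eqref{def_u}; (4) invoke Theorem \ref{thm_u} to confirm that no spurious components appear — the only ways to enlarge $V_4$ are those enumerated there, so the list of branches is exhaustive. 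The main obstacle I anticipate is step (2) for case $(8,5)$: producing the clean closed form Eq.~\eqref{d_u_12} requires eliminating two parameters $a,b$ from $(x^4+ax^2+1)(x^4+bx^2+1)$ after normalization (the product already has constant term $1$ but leading coefficient $1$ and one must match it to $X^8+a_3X^6+a_2X^4+a_1X^2+1$, giving $a_3=a+b$, $a_2=ab+2$, $a_1=a+b$, hence $a_1=a_3$ automatically and $a_2=u_3/2\cdot(\dots)$), then checking this is equivalent to $2u_1-u_3^2=0$ together with the residual relation — the bookkeeping is delicate because $a_1=a_3$ already forces $u_3=2a_1^2$ and one must verify that $2u_1-u_3^2=0$ is not vacuous but rather exactly the image, distinguishing it from the genuinely one-parameter family it should be. I would also need to be careful, throughout, with the exceptional regimes of \eqref{def_u} (where $u_2$ is replaced by $w=a_2^2$), since the invariant coordinates change there and the relations must be stated in the correct chart.
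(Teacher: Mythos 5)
Your overall strategy---split off the two branches $2u_1\pm u_3^2=0$ via Theorem \ref{thm_u}, then bring each remaining family from Table 1 to normal form, compute its dihedral invariants and eliminate the parameters---is exactly the paper's, but the execution misassigns several nodes of Figure 2 in ways that would derail the computation. The most serious problem is your treatment of Eq.~\eqref{d_u_12}: you propose to obtain it by eliminating $a,b$ from the $\Z_2^3$ family $(X^4+aX^2+1)(X^4+bX^2+1)$. That family is $2$-dimensional (case $(8,5)$, $\delta=2$), so elimination can only yield a single relation, namely $2u_1-u_3^2=0$; it cannot produce the \emph{pair} of relations in Eq.~\eqref{d_u_12}, which cut out a curve. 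In the paper, Eq.~\eqref{d_u_12} is the $D_{12}$ locus (case $(12,4)$, $\delta=1$): one takes $Y^2=X(X^6+aX^3+1)$, normalizes via $X\to\frac{X+1}{X-1}$ followed by a scaling, computes $(u_1,u_2,u_3)$ as functions of $\l=\frac{a-2}{a+2}$, and eliminates $\l$. This normalization of the $D_{12}$ family is the main computational content of the proof and is absent from your plan.

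Two further misassignments. First, the node $u_2=0$ is not $D_{12}$ but $U_6=(24,5)$: it is the intersection of the $D_{12}$ locus \eqref{d_u_12} with the $\Z_2\times\Z_4$ branch, obtained in the paper by imposing $u_2=0$ on the $\l$-parametrization (i.e.\ $81\l^2-94\l+81=0$) and checking that the resulting triples also satisfy $2u_1=-u_3^2$, so that both $D_{12}$ and $\Z_2\times\Z_4$ embed in the automorphism group. Second, the two triples $(0,196,0)$ and $\left(\frac{8192}{81},-\frac{1280}{27},\frac{128}{9}\right)$ do not correspond to $U_6$ and $\Z_2\times S_4$ separately, as you claim; both arise from $81\l^2-1568\l+81=0$ on the $D_{12}$ locus and are dihedral-invariant tuples of one and the same curve with $\Aut\iso\Z_2\times S_4$ --- this reflects precisely the phenomenon, noted after Theorem \ref{thm1}, that singular points of $\L_3$ carry more than one tuple of dihedral invariants. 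Until these identifications are corrected, the case analysis does not establish Figure 2.
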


\begin{proof}
If $\bAut(\X_3)\iso D_4$, then from Theorem \ref{thm_u} we have $4\,u_1^2=u_3^4$. This can be
checked easily by computing the dihedral invariants. If $\,2u_1^2\, = \,u_3^2$ then $\Aut(\X_3)\iso
\Z_2\times \Z_2\times \Z_2$. If $\,2 u_1^2\, = \,-u_3^2$ then $\Aut(\X_3)\iso \Z_2\times \Z_4$, see
the remark after Theorem \ref{thm_u}.

Let $\X_3$ be a curve with equation $Y^2=X\,(X^6 + a X^3 +1)$, where $a\neq 0,\pm 2$. By a
transformation $X \to \frac {X+1} {X-1}$, $\X_3$ has equation
$$Y^2= X^8 + (9\l -7) X^6 + 15 (\l-1) X^4 + (7 \l -9) X^2 -\l$$
where $\l=\frac {a-2}{a+2}$, $\l\neq 0, \pm 1$. Then, by another transformation $X \to
\sqrt[8]{-\l}\,X$, we compute the dihedral invariants:
\begin{small}
\begin{equation}
\begin{split}
u_1 & = - \frac 1 {\l^3} \, (6561\l^6-20412\l^5+26215 \l^4 - 24694 \l^3 \\
    & \, +26215\l^2 -20412\l +6561), \\
u_2 & = 15 \frac {(\l-1)^2 (81\l^2 - 94 \l +81)} {\l^2}, \\
u_3 & = -2 \frac {(7\l -9) (9\l -7)} {\l}.
\end{split}
\end{equation}
\end{small}
Eliminating $\l$, we get $\l = -126 \frac 1 {u_3 - 260}$ and
\begin{equation}\label{d_u_12}
\begin{split}
u_2 & = \frac 5 {588} \, (u_3 -8) (9 u_3 -1024),\\
u_1 & = \frac 9 {2744} u_3^3 - \frac {873} {686} u_3^2 + \frac {149504} {3087} u_3 - \frac
{1048576} {3087}.
\end{split}
\end{equation}
Notice that $u_3\neq 260$, otherwise $a=2$.

If $u_2=0$ then we have $81\l^2-94\l+81=0$ and
$$(u_1, u_2, u_3) \, = \, (8, 0, -32) \mbox{ or } \left( - \frac  {524288} {81}, 0, \frac {1024} 9 \right).$$
Both of these triples satisfy the relation $\, 2u_1\, = \, - u_3^2$ so both $D_{12}$ and
$\Z_2\times\Z_4$ are embedded in $\Aut(\X_3)$. Hence, $\Aut(\X_3)\iso U_6$.

If $2\,u_1^2=u_3^2$ then $81\,\l^2-1568\l+81=0$. Then,
$$(u_1, u_2, u_3)\, = \, (0, 196, 0) \mbox{ or } \left(\frac {8192} {81}, - \frac {1280} {27}, \frac {128} 9\right).$$
Both triples determine the same isomorphism class of curves and correspond to the case
$\Aut(\X_3)\iso\Z_2\times S_4$. The proof is complete.
\end{proof}

The above loci can be easily determined in terms of $J_2,\dots,J_7$. This would be beneficial
because we don't have to find the normal decomposition form of the curve in order to determine the
automorphism group. However, we don't display such equations in terms of $J_2,\dots,J_7$. It is
worth mentioning that, if $\Aut(\X_3)\iso\Z_2\times\Z_4,\ U_6$ or $V_8$, then $J_3=J_5=J_7=0$.

\section{Field of moduli of genus 3 hyperelliptic curves}
In this section we study the field of moduli of genus 3 hyperelliptic curves with extra
automorphisms. Let $\X$ be a curve defined over $\C$. A field $F\subset\C$ is called a \emph{field
of definition} of $\X$ if there exists $\X'$ defined over $F$ such that $\X'$ is isomorphic to $\X$
over $\C$.

\begin{definition}
The \emph{field of moduli} of $\X$ is a subfield $F\subset\C$ such that for every automorphism
$\s\in\Aut(\C)$ the following holds: $\X$ is isomorphic to $\X^\s$ if and only if $\ \s_F = id$.
\end{definition}

We will use $\p=[\X]\in\M_g$ to denote the corresponding \emph{moduli point} and $\M_g(\p)$ the residue field of
$\p$ in $\M_g$. The field of moduli of $\X$ coincides with the residue field $\M_g(\p)$ of the point $\p$ in
$\M_g$. The notation $\M_g(\p)$ (resp. $M(\X)$) will be used to denote the field of moduli of $\p\in\M_g$ (resp.
$\X$). If there is a curve $\X^\prime$ isomorphic to $\X$ and defined over $M(\X)$, we say that $\X$ has a
\emph{rational model over its field of moduli}. As mentioned above, the field of moduli of curves is not
necessarily a field of definition, see \cite{Shi1} for examples of such families of curves.

\begin{lemma}
Let $\u_0\in\L_3(k)$ such that $|\Aut(\u_0)| > 4$. Then, there exists a genus 3 hyperelliptic curve
$\X_3$ defined over $k$ such that $\u(\X_3)=\u_0$ as defined in Eq (\ref{def_u}). Moreover, the
equation of $\X_3$ over its field of moduli is given by:

\medskip

i) If $|\Aut(\X_3)|=16$ then $$Y^2=w X^8+w X^4+1.$$

\smallskip

ii) If $\Aut(\X_3)\iso D_{12}$ then
\begin{small}
\begin{equation}
\begin{split}
Y^2 = & \ (u_3-260) X^8 - 7(u_3-98)X^6 + 15(u_3-134)X^4 \\
  & - 9(u_3-162)X^2 + 126.
\end{split}
\end{equation}
\end{small}

iii) If $\Aut\iso\Z_2\times\Z_4$ then $$Y^2=u_3^4 X^8+u_3^4X^6+8 u_3X^2-16.$$

iv) If $\Aut(\u)\iso\Z_2^3$ then $$Y^2= u_1 X^8 +u_1 X^6 + u_2 X^4 + u_3 X^2 +2.$$
\end{lemma}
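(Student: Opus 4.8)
The plan is to produce, for each of the four cases, an explicit curve defined over $k$ (indeed over the field generated by the dihedral invariants of $\u_0$, which is the field of moduli by Theorem~\ref{thm1} and Theorem~\ref{thm_3}) whose dihedral invariant tuple equals $\u_0$, and then to verify that the displayed model has the claimed automorphism group and the claimed invariants. The unifying idea is simple: the normal equation $Y^2 = X^8 + a_3 X^6 + a_2 X^4 + a_1 X^2 + 1$ is not defined over $k(\u_0)$ in general because passing to it requires choosing a square root (the substitution $X\mapsto\sqrt[8]{-\l}\,X$ and similar scalings in the proof of Theorem~\ref{thm_4} introduce radicals), but the locus with larger automorphism group is cut out by the algebraic relations of Figure~2, and on each such sublocus one can clear the offending radical by a rational rescaling $X \mapsto \mu X$, $Y\mapsto \nu Y$ with $\mu,\nu \in k(\u_0)$. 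So for each case I would (a) recall from Figure~2 the defining relation, (b) solve for $a_1,a_2,a_3$ in terms of $\u_0$ up to the ambiguity, (c) exhibit the rescaling that makes all coefficients lie in $k(\u_0)$, landing on the displayed equation, and (d) recompute $u_1,u_2,u_3$ (or $w$) from that equation to confirm they return $\u_0$.

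Concretely, in case (iv), $\Aut\iso\Z_2^3$, the relation is $2u_1 = u_3^2$ (from $4u_1^2=u_3^4$ together with the $D_4$ branch), and the curve $Y^2 = u_1 X^8 + u_1 X^6 + u_2 X^4 + u_3 X^2 + 2$ is visibly defined over $k(u_1,u_2,u_3)$; I would divide through by the leading coefficient, reach a normal-type equation, read off $a_3,a_2,a_1$, and check that the dihedral-invariant formulas $u_1=a_1^4+a_3^4$, etc., reproduce the input — the palindromic-with-sign structure of the coefficients is exactly engineered so that $X\mapsto 1/X$ gives the extra involution and the $V_4$ already present forces $\Z_2^3$ once $2u_1=u_3^2$. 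Case (i), $|\Aut|=16$ (the group $\Z_2\times D_8$, curve $y^2=x^8+ax^4+1$), is the sublocus $a_1=a_3=0$, $w=a_2^2$; here the only invariant is $w$, and $Y^2 = wX^8 + wX^4 + 1$ has $a_2 = 1$ after scaling $Y$ and a scaling of $X$ — I must check the invariant of this normalized model is indeed $w$, i.e. that the scaling $X\mapsto w^{1/4}X$ sending $y^2=x^8+wx^4+1$-type data to this form is accounted for correctly. Cases (ii) and (iii) are the ones that genuinely require the radical-clearing trick: case (ii) is Eq.~\eqref{d_u_12} (the $D_{12}$ locus, $y^2=x(x^6+ax^3+1)$), and the displayed octavic is exactly the equation $Y^2 = X^8 + (9\l-7)X^6 + 15(\l-1)X^4 + (7\l-9)X^2 - \l$ from the proof of Theorem~\ref{thm_4} \emph{before} the radical substitution $X\mapsto\sqrt[8]{-\l}\,X$, rewritten via $\l = -126/(u_3-260)$ and cleared of denominators by a rational scaling of $X$ and $Y$; case (iii), $\Z_2\times\Z_4$, is the locus $2u_1+u_3^2=0$ handled the same way from $y^2=(x^4-1)(x^4+ax^2+1)$.

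The bookkeeping that each displayed equation has the right \emph{full} automorphism group (and not merely $V_4$) I would dispatch by the lattice of Figure~1 and Theorem~\ref{thm_4}: the defining relation of each sublocus already pins down $\bAut$, and the remark after Theorem~\ref{thm_u} distinguishes whether the $V_4$-involutions lift, which together with the order count $|\Aut|>4$ fixes the isomorphism type; for case (ii) one also notes $u_3\neq 260$ so $a\neq 2$ and the curve does not degenerate to $U_6$, and for (iii) one excludes the finitely many special points landing in $U_6$ or $\Z_2\times S_4$.

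\textbf{Main obstacle.} The hard part is purely the explicit verification that the proposed rational rescalings carry the normal equation to the four displayed equations \emph{and} that the dihedral invariants of those rescaled models are the \emph{same} tuple $\u_0$, not merely a tuple in the same $GL_2$-orbit — because the dihedral invariants $u_i$ are defined from the coefficients of the normal equation and are not literally $GL_2$-invariant, only their appropriately weighted combinations are. One must be careful that the scaling factor $\mu$ (an eighth root or fourth root of something in $k(\u_0)$) drops out of $u_1,u_2,u_3$; this works precisely on these subloci because the extra relation among the $u_i$ kills the monomials where the radical would survive, but checking that cancellation in each of cases (ii)--(iv) is the computational crux, and I would organize it by expressing everything through $\l$ (resp. through $w$, resp. through $u_3$) as in Theorem~\ref{thm_4} and then substituting the rationalization of $\l$.
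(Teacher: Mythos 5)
Your proposal is correct and follows essentially the same route as the paper: the paper's entire proof is the one-line assertion that one "simply computes the dihedral invariants" of each displayed model and checks they satisfy the relations for the claimed automorphism group, which is exactly the verification you lay out (your discussion of clearing the radical scaling $X\mapsto\mu X$ and checking that the sublocus relation makes $\mu$ drop out of $u_1,u_2,u_3$ is the honest content of that "simple computation"). You supply more detail than the paper does, but there is no difference in method.
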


\begin{proof}
The proof in all cases consists of simply computing the dihedral invariants. It is easy to check
that these dihedral invariants satisfy the corresponding relations for $\Aut(\X_3)$ given above.
\end{proof}

\begin{corollary}
Let $\p\in\H_3$ such that $|\Aut(\p)|>2$. Then the field of moduli of $\p$ is a field of
definition.
\end{corollary}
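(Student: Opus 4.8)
The plan is to split into two cases according to whether $|\Aut(\p)| > 4$ or $|\Aut(\p)| = 4$. The case $|\Aut(\p)| > 4$ is handled directly by the preceding Lemma: every such moduli point lies in $\L_3(k)$ for $k = \M_3(\p)$ its field of moduli (since the dihedral invariants, being rational functions of the absolute invariants $i_1,\dots,i_5$ by Theorem~\ref{thm_3}, are defined over $\M_3(\p)$), and the Lemma produces an explicit equation of a curve $\X_3$ defined over $k$ with the prescribed dihedral invariants. By the Corollary to Theorem~\ref{thm1} (together with the analogous matching statements for the larger groups, which are exactly the content of Figure~2 and the Lemma), such a curve is unique up to isomorphism, hence $\X_3$ is a rational model of $\p$ over its field of moduli, so $\M_3(\p)$ is a field of definition.

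It remains to treat the case $|\Aut(\p)| = 4$, i.e.\ $\Aut(\p) \iso \Z_2 \times \Z_2$ (case 2 of Table~1), equivalently $\bAut \iso \Z_2$ and $\p$ a generic point of $\L_3$. Here the argument is: the field of moduli $k = \M_3(\p)$ contains the three dihedral invariants $u_1, u_2, u_3$ (again by Theorem~\ref{thm_3}), and I would write down the candidate model
\[
\X_3 : \quad Y^2 = u_1 X^8 + u_1 X^6 + u_2 X^4 + u_3 X^2 + 2,
\]
which is visibly defined over $k$. Then I must check two things. First, $\X_3$ has the right dihedral invariants: substituting $a_1 = u_1$, $a_3 = 1$ rescaled appropriately — more carefully, I would recover a normal equation $Y^2 = X^8 + a_3 X^6 + a_2 X^4 + a_1 X^2 + 1$ isomorphic to the above by a substitution $X \mapsto \l X$ and check that the triple $(a_1^4 + a_3^4,\, (a_1^2+a_3^2)a_2,\, 2a_1 a_3)$ equals $\u_0$; this is the same style of bare computation invoked in the Lemma's proof. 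Second, $\Aut(\X_3) \iso V_4$ exactly (not larger), which holds precisely because $\u_0$ satisfies none of the proper-sublocus relations of Figure~2; equivalently one invokes the Corollary to Theorem~\ref{thm1}, which says the isomorphism class of a $V_4$-curve is determined by its dihedral invariants, so $\X_3$ (having the same invariants as $\p$) is isomorphic to $\p$ over $\C$, and being defined over $k$ it is the desired rational model.

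There is one genuine subtlety, and it is the main obstacle: the denominators. The defining data $u_1, u_2, u_3$ of a point of $\L_3$ must be such that the normalized equation exists — recall the case analysis in Eq.~\eqref{def_u}, where if $a_1 = a_3 = 0$ one works with $w = a_2^2$, and if $a_1^2 + a_3^2 = 0$ with $a_2 \neq 0$ one uses $(u_1, w, u_3)$. So strictly the model $Y^2 = u_1 X^8 + u_1 X^6 + u_2 X^4 + u_3 X^2 + 2$ is only correct on the open subset where $u_1 + \text{(something)} \neq 0$; I would need to treat the degenerate strata $a_1 = a_3 = 0$ and $a_1^2 + a_3^2 = 0,\, a_2 \neq 0$ separately, exhibiting for the first stratum a model of the shape $Y^2 = X^8 + a_2 X^4 + 1$ with $a_2^2 = w \in k$ — which requires $\sqrt{w} \in k$, so one must instead use, e.g., $Y^2 = w X^8 + w X^4 + 1$ (as in case i) of the Lemma), defined over $k$, and recheck its invariants. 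Handling these special fibers uniformly, and verifying in each that no accidental extra automorphism appears, is where the real care is needed; the generic computation is routine.
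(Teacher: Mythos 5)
There is a genuine gap, and it sits exactly where your proposal does the most work: the order-$4$ case. First, your case analysis is incomplete. $|\Aut(\p)|=4$ does not force $\Aut(\p)\iso V_4$: Table~1 also lists $\Z_4$ (case 3, $y^2=x(x^2-1)(x^4+ax^2+b)$), and such curves have \emph{no} extra involution, hence do not lie in $\L_3$ at all, so the dihedral-invariant machinery cannot be applied to them. The paper disposes of both order-$4$ groups at once by a completely different mechanism: the quotient $\X_3/\Aut(\X_3)$ is a conic with a non-trivial rational point, and the Debes--Emsalem criterion \cite{DE} then gives descent to the field of moduli (non-constructively --- note the abstract only claims explicit rational models for $|\Aut|>4$). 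Second, your proposed explicit model for the generic $V_4$ stratum does not work. Normalizing $Y^2=u_1X^8+u_1X^6+u_2X^4+u_3X^2+2$ by $X\mapsto\l X$ with $\l^8=2/u_1$ gives $a_3=u_1\l^6/2$, $a_1=u_3\l^2/2$, and the resulting first dihedral invariant is $a_1^4+a_3^4=\tfrac{u_3^4}{8u_1}+\tfrac{u_1}{2}$, which equals $u_1$ only when $4u_1^2=u_3^4$, i.e.\ precisely on the $D_4$ sublocus. That is why the Lemma restricts this model to $\Aut\iso\Z_2^3$; forcing the $X^8$ and $X^6$ coefficients to coincide already imposes the extra symmetry. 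For a generic $V_4$ point no such one-line model is available, and the descent genuinely requires the conic argument.

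A third, more minor, omission: your blanket claim that every $\p$ with $|\Aut(\p)|>4$ lies in $\L_3$ is false. The curve $y^2=x^7-1$ has $\Aut\iso\Z_{14}$ of order $14$ and no extra involution, so it is outside $\L_3$; it must be listed separately (trivially, since it is visibly defined over $\Q$). Your treatment of the remaining $|\Aut|>4$ cases via the Lemma, and your attention to the degenerate strata of Eq.~\eqref{def_u}, match the paper's intent, but the corollary as stated cannot be proved without either the Debes--Emsalem input or some substitute for it in the order-$4$ cases.
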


\begin{proof}
There is only one hyperelliptic curve of genus 3 which has no extra involutions and whose automorphism group has
more than $4$ elements, see \cite{MS}. This curve is $Y^2= X^7-1$ and its field of moduli is $\Q$. The result
follows from the above Lemma for all groups of order $>4$. Let $\X_3$ be a curve such that its automorphism group
$\Aut(\X_3)$ has order 4. Then $\Aut(\X_3)$ is isomorphic to $\Z_4$ or $V_4$. In both cases the quotient curve
$\X_3 \ \Aut(\X_3)$ is a conic which contains a non-trivial rational point. The result follows from \cite{DE}.
\end{proof}

\begin{remark}
An interesting problem would be to find an algorithm which finds a rational model over the field of moduli for
curves with automorphism group $\Z_2$. In the case of genus 2 this has been done by Mestre; see \cite{Me}.
\end{remark}

For all the computations in this paper we have used Maple; see \cite{Ma}. The above results have been implemented
in a Maple package which is available upon request. In this package we can compute the field of moduli of any
genus 3 hyperelliptic curve and provide a rational model for curves which have more than 4 automorphisms. Further,
the automorphism group can be computed and all invariants of binary octavics defined in Eq.\eqref{J}.

\end{document}